\def\M{\mathbf{M}}
\def\N{\mathbb{N}}
\def\R{\mathbb{R}}
\def\C{\mathbb{C}}
\def\Z{\mathbb{Z}}
\def\I{\mathbf{I}}
\DeclareMathOperator{\tr}{tr}
\def\cH{\mathcal{H}}
\def\cB{\mathcal{B}}
\def\cA{\mathcal{A}}
\newcommand*{\unit}[1]{\operatorname{U}_{#1}(\C)}
\DeclareMathOperator{\Cstarred}{C^\star_{red}}
\DeclareMathOperator{\Cstarfull}{C^\star_{full}}
\newtheorem{theorem}{Theorem}
\newtheorem{corollary}[theorem]{Corollary}
\begin{document}
\begin{frontmatter}

\title{Upper bound hierarchies for noncommutative polynomial optimization \thanksref{footnoteinfo}} 

\thanks[footnoteinfo]{This work was supported by the Slovenian Research Agency program P1-0222 and grants J1-50002, J1-2453, N1-0217,
J1-3004, the NSF grant DMS-1954709, the EPOQCS grant funded by the LabEx CIMI (ANR-11-LABX-0040), the FastQI grant funded by the Institut Quantique Occitan, the HORIZON–MSCA-2023-DN-JD of the European Commission under the Grant Agreement No 101120296 (TENORS). This work was performed within the project COMPUTE, funded within the QuantERA II Programme that has received funding from the EU’s H2020 research and innovation programme under the GA No 101017733. {\normalsize\euflag}}

\author[First]{Igor Klep} 
\author[Second]{Victor Magron}
\author[Third]{Ga\"el Mass\'e} 
\author[Fourth]{Jurij Vol{\v c}i{\v c}}

\address[First]{Faculty of Mathematics and Physics, Department of Mathematics,  University of Ljubljana \& Institute of Mathematics, Physics and Mechanics, Ljubljana, Slovenia (e-mail: igor.klep@fmf.uni-lj.si).}
\address[Second]{LAAS-CNRS, Université de Toulouse, CNRS, IMT, Toulouse, France (e-mail: vmagron@laas.fr).}
\address[Third]{LAAS-CNRS, Université de Toulouse, CNRS,  Toulouse, France (e-mail: gael.masse@protonmail.com)}
\address[Fourth]{Department of Mathematics, Drexel University, Pennsylvania (e-mail: jurij.volcic@drexel.edu)}       

\begin{abstract}                
This work focuses on minimizing the eigenvalue of a noncommutative polynomial subject to a finite number of noncommutative polynomial inequality constraints. 

Based on the Helton-McCullough Positivstellensatz, the noncommutative analog of Lasserre's moment-sum of squares hierarchy provides a sequence of lower bounds converging to the minimal eigenvalue, under mild assumptions on the constraint set. Each lower bound can be obtained by solving a 
semidefinite program. 

We derive complementary converging hierarchies of upper bounds. They are noncommutative analogues of the upper bound hierarchies due to Lasserre for minimizing polynomials over compact sets. Each upper bound can be obtained by solving a 
generalized eigenvalue problem. 
\end{abstract}

\begin{keyword}
Noncommutative eigenvalue minimization, generalized eigenvalue problem, separating states, pushforward state, Bell inequalities \\
\emph{AMS subject classifications}: 90C22, 90C26
\end{keyword}

\end{frontmatter}

\section{Introduction}
\label{sec:intro}

In this work we consider hierarchies of upper bounds for minimal eigenvalue of noncommutative polynomials over noncommutative \emph{basic semialgebraic sets}, i.e., sets defined by finitely many polynomial inequalities.  
We are in particular interested in Bell inequalities, initially introduced by  \cite{bell1964einstein}, that can be viewed as specific types of inequalities on eigenvalues of noncommutative polynomials; see  \cite{pironio2010convergent}. 
In the commutative setting, \emph{polynomial optimization} aims at finding the minimum of a polynomial objective function under finitely many polynomial inequality constraints. 
As shown, e.g., in \cite{laurent2009sums}, this optimization problem is NP-hard to solve exactly, thus a plethora of approximation schemes have been developed in the last two decades, in particular \emph{the moment-sum of squares} (moment-SOS) hierarchy by \cite{lasserre2001global}, also known as the Lasserre hierarchy, that relies on the Positivstellensatz by \cite{putinar1993positive}. 
At a given step of this hierarchy, the corresponding lower bound is computed by solving a semidefinite program, i.e., by minimizing a linear objective function under linear matrix inequality constraints; see \cite{vandenberghe1996semidefinite}. 
The Lasserre hierarchy of lower bounds is ensured to converge to the polynomial minimum under mild natural assumptions often satisfied in practice, e.g., in the presence of a ball constraint. 
Similarly, minimal eigenvalues of noncommutative polynomials can be approximated by a lower bound hierarchy, 
also known as the Navascu{\'e}s-Pironio-Ac{\'\i}n (NPA) hierarchy; 
see \cite{doherty2008quantum,navascues2008convergent,burgdorf2016optimization}, that relies on the Positivstellensatz by \cite{helton2004positivstellensatz}. 
Convergence is ensured under the same assumption as in the commutative case. 

Back in the commutative setting, another hierarchy proposed in \cite{lasserre2011new} yields a monotone sequence of \emph{upper bounds} which converges to the minimum of a polynomial on a given set, and therefore can be seen as complementary to the standard Lasserre hierarchy of lower bounds. 
At a given step of this hierarchy, the corresponding upper bound is computed by solving a so-called {\em generalized eigenvalue} problem. 
While there is no empirical evidence that the Lasserre hierarchy of upper bounds could outperform classical numerical schemes such as brute-force sampling methods based on Monte-Carlo, local optimization solvers based on gradient descent, it turns out that the asymptotic behavior of the upper bound hierarchy has been better understood than for the lower bound hierarchy. 
In \cite{deKlerk16}, the authors obtain convergence rates which often match practical experiments and are no worse than $O(1/\sqrt{d})$, where $d$ is the relaxation order in the hierarchy. 
On some specific sets this convergence rate has been improved to $O(1/d^2)$, e.g., for the box $[-1,1]^n$ by \cite{de2017improved} and for the sphere by \cite{de2020convergence}. 
Recently, similar convergence rates could be obtained by  \cite{slot2022sum} for the standard hierarchy of lower bounds  by combining upper bound rates with an elegant use of Christoffel-Darboux (CD) kernels; see \cite{lasserre2022christoffel} for a recent survey on CD kernels. 
As for the lower bound hierarchy, the sizes of the involved matrix optimization variables are critical and restrict its use to small size problems. 
For the lower bound hierarchy, a common workaround consists of exploiting the structure, e.g., sparsity or symmetry of the input polynomials; see \cite{magron2023sparse} for a recent survey on sparsity-exploiting techniques and \cite{klep2023} for even more sophisticated structure exploitation techniques applied to Bell inequalities. 
A first attempt to improve practical efficiency of the upper bound hierarchy for polynomial optimization has been done in \cite{lasserre2021connecting}.
The idea is to use the pushforward measure of the uniform measure by the polynomial to be minimized. 
In doing so one reduces the initial problem to a related univariate problem and as a result one obtains another hierarchy of upper bounds -- again from generalized eigenvalue problems -- which involves univariate sums of squares polynomials of increasing degree. 

By contrast with the commutative setting, obtaining upper bounds for minimal eigenvalues of noncommutative polynomials can be much more challenging. 
Existing methods include the density matrix renormalization group, e.g., by \cite{white1992density}, which is a numerical variational technique devised to obtain the low-energy physics of quantum many-body systems, or quantum variants of Monte-Carlo methods, e.g., by  \cite{nightingale1998quantum}. 
A first attempt has been done by \cite{ricou2020necessary} to compute minimal eigenvalues of pure quartic oscillators, but without any convergence guarantees and lack of scalability. 

\paragraph*{\textbf{Contributions}.} 
The goal of this work is to propose an alternative to these two families of methods, with potential applications to maximum violation level estimates for Bell inequalities.
We derive two converging upper bound hierarchies for minimal eigenvalues of noncommutative polynomials in separable C*-algebras. 
These hierarchies can be seen as the noncommutative analogues of \cite{lasserre2011new} and \cite{lasserre2021connecting}. 
Similarly to the commutative case, the hierarchies are parametrized by the choice of a sequence of \emph{separating states}, and each upper bound is obtained by solving a single finite-dimensional generalized eigenvalue problem. 
Our framework directly applies to approximate violation levels of Bell inequalities by considering tensor products of universal group C*-algebras with separating state sequences, that can be evaluated using \cite{collins06} calculus for Haar integration over unitary groups. 

\section{Positive polynomials and separating states}
\label{sec:state}

Let $F$ be a noncommutative polynomial in $m$ variables. We are interested in optimizing or deciding positive semidefiniteness of $F(X_1,\dots,X_m)$ over all tuples of operators $(X_1,\dots,X_m)$ satisfying given polynomial relations. Such operators can be often seen as representations of a single (typically very large) operator algebra $\cA$, and the positivity of $F$ on such operators is then equivalent to positivity of a single element $f\in\cA$. For example, consider the problem of whether $F(U_1,\dots,U_n)$ is positive semidefinite for all tuples of unitaries $U_1,\dots,U_n$ acting on a separable Hilbert space. This is equivalent to $f=F(W_1,\dots,W_n)$ being positive semidefinite, where $W_1,\dots,W_n$ are the unitary generators of the universal group C*-algebra $\Cstarfull(\Z^{\star n})$.
Thus we develop our approach to noncommutative positivity eigenvalue optimization in terms of positivity of elements in operator algebras.

Let $\cA$ be a unital C*-algebra. Let us introduce some terminology pertaining to states (positive unital linear functionals) on $\cA$ and $\star$-subalgebras of $\cA$ that is used in this section.
Given two states $\phi$ and $\phi$ on $\cA$, we say that $\psi$ \emph{weakly dominates} $\phi$ if there exists a constant $\alpha>0$ such that $\phi(aa^*)\le\alpha \psi(aa^*)$ for every $a\in\cA$.
A set of states $S$ on $\cA$ is \emph{separating} if for every nonzero $a\in\cA$ there exists $\phi\in S$ such that $\phi(aa^*)>0$. A separating sequence $(\phi_d)_{d=1}^\infty$ is \emph{increasing} if $\phi_{d+1}$ weakly dominates $\phi_d$ for all $d\in\N$.
If $(\phi_d)_{d=1}^\infty$ is a separating sequence, then 
\begin{equation}\label{e:inc}
\left(\frac{2^d}{2^d-1}\sum_{i=1}^d\frac{1}{2^i}\phi_i\right)_{d=1}^\infty
\end{equation}
is an increasing separating sequence,
and $\phi=\sum_{d=1}^\infty\frac{1}{2^d}\phi_d$ is a faithful state on $\cA$.
Note that separable C*-algebras (in particular, finitely generated C*-algebras) always admit faithful states \cite[Exercise I.9.3, or proof of Theorem I.9.23]{Tak02}.
Given $G\subset \cA$ let $\C\langle G\rangle_d$ denote the span of all $\star$-words in $G$ (i.e., products of elements of $G$ and their adjoints) of length at most $d$, and let $\C\langle G\rangle$ denote the $\star$-algebra generated by $G$.
We say that $G$ is \emph{generating} if $\cA$ is the closure in the strong operator topology of $\C\langle G\rangle$.

\begin{theorem}\label{t:general}
Let $\cA$ be a unital C*-algebra, $G$ its generating set, and 
$(\phi_d)_{d=1}^\infty$ an increasing separating sequence of states on $\cA$. 
For $f=f^*\in\C\langle G\rangle$, the following are equivalent:
\begin{enumerate}
    \item[(i)] $f\succeq0$ in $\cA$;
    \item[(ii)] for every $d\in\N$, $\phi_d(hfh^*)\ge0$ for all $h\in\C\langle G\rangle_d$;
    \item[(iii)] for every $d\in\N$, $\phi_d(p(f)fp(f)^*)\ge0$ for all $p\in\R[t]_d$.
\end{enumerate}
\end{theorem}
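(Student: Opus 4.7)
The plan is to establish $(i)\Rightarrow(ii)$ and $(i)\Rightarrow(iii)$ directly from the existence of a positive square root $f = g^*g$ with $g = f^{1/2} \in \cA$: then $hfh^* = (gh^*)^*(gh^*) \succeq 0$ and $p(f)fp(f)^* = (gp(f)^*)^*(gp(f)^*) \succeq 0$, and every state (in particular each $\phi_d$) is non-negative on positive elements.

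For the two converse implications I would work by contraposition, using continuous functional calculus on $C^*(f,1)\subseteq\cA$ together with the separating and increasing properties of $(\phi_d)$. Suppose $f \not\succeq 0$, so $\sigma(f)$ meets $(-\infty,0)$. Fix $\varepsilon>0$ with $\sigma(f) \cap (-\infty,-\varepsilon] \ne \emptyset$ and choose a nonzero continuous $g : \R \to [0,1]$ supported in $(-\infty,-\varepsilon]$; functional calculus then produces a nonzero positive $g(f) \in \cA$ satisfying
$$
g(f)\, f\, g(f) \;=\; \bigl(t\, g(t)^2\bigr)(f) \;\preceq\; -\varepsilon\, g(f)^2 .
$$
The separating property supplies $d_0 \in \N$ with $\phi_{d_0}(g(f)^2) > 0$, and iterating the weak-domination inequalities $\phi_i(aa^*) \le \alpha_i\, \phi_{i+1}(aa^*)$ propagates this to $\phi_d(g(f)^2) \ge c_d > 0$ for every $d \ge d_0$. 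For $(iii)\Rightarrow(i)$, Weierstrass approximation yields real polynomials $p_d \in \R[t]_d$ with $\|p_d - g\|_{C(\sigma(f))} \to 0$, and since $\|p_d(f) - g(f)\| \le \|p_d - g\|_{C(\sigma(f))}$,
$$
\phi_d\bigl(p_d(f)\, f\, p_d(f)^*\bigr) \;\le\; -\varepsilon\, \phi_d(g(f)^2) + C\,\|p_d - g\|_{C(\sigma(f))} ,
$$
with $C$ depending only on $\|f\|$ and $\|g\|_\infty$; for $d$ large enough this is strictly negative, violating (iii). The argument for $(ii)\Rightarrow(i)$ is analogous: approximate $g(f) \in \cA$ in norm by $h \in \C\langle G\rangle$, using that $G$ generates $\cA$, and extract a sign violation of (ii) at a level $d$ for which $h \in \C\langle G\rangle_d$.

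The main obstacle is the coupling between the level $d$ and the lower bound on $\phi_d(g(f)^2)$: the polynomial (respectively algebraic) approximation error must be driven below a threshold proportional to $c_d$ while the approximant still fits in $\R[t]_d$ (respectively $\C\langle G\rangle_d$). The strength of the increasing hypothesis is exactly what prevents $c_d$ from decaying faster than the approximation rate can compensate; for the canonical construction in \eqref{e:inc} the weak-domination constants $\alpha_i$ are in fact uniformly bounded, so $c_d$ stays bounded away from zero and the contradiction is obtained at some finite $d$.
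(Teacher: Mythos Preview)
Your route via contraposition and functional calculus is genuinely different from the paper's. The paper never approximates: it forms the faithful state $\phi=\sum_{d\ge1}2^{-d}\phi_d$, argues that (ii) forces $\phi(hfh^*)\ge0$ for every $h\in\C\langle G\rangle$ (this is where ``increasing'' enters), extends by density to all $a\in\cA$, and then reads off $f\succeq0$ from the GNS representation attached to $\phi$, which is a $*$-embedding because $\phi$ is faithful. For (iii)$\Rightarrow$(i) the paper simply replays this argument inside the commutative C*-subalgebra generated by $f$.

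The obstacle you flag in your last paragraph is a genuine gap, and the resolution you propose does not close it in general. The increasing hypothesis gives only $c_{d+1}\ge\alpha_d^{-1}c_d$, hence $c_d\ge c_{d_0}\prod_{i=d_0}^{d-1}\alpha_i^{-1}$; but the weak-domination constants $\alpha_i$ in the definition are completely unconstrained, so for an arbitrary increasing separating sequence $c_d=\phi_d(g(f)^2)$ can decay at any prescribed rate. There is then no reason a polynomial of degree at most $d$ can approximate $g$ on $\sigma(f)$ to within $\varepsilon c_d/C$, and the degree--index coupling cannot be broken by your estimate. Your argument is complete only for sequences such as \eqref{e:inc} where $c_d$ stays bounded away from zero---and in that regime what you are really exploiting is that a fixed convex combination of the $\phi_d$ is faithful, which is exactly the paper's starting point. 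Passing to a single faithful state up front removes the approximation bookkeeping entirely and treats (ii) and (iii) uniformly.
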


\begin{proof}
Denote $\phi=\sum_{d=1}^\infty\frac{1}{2^d}\phi_d$. Then $\phi$ is a faithful state on $\cA$.
Clearly (i) implies (ii) and (iii).

Assume (ii) holds. Then $\phi(hfh^*)\ge0$ for all $h\in\C\langle G\rangle$ since $(\phi_d)_d$ is increasing. Since $G$ is generating, we have $\phi(afa^*)\ge0$ for all $a\in\cA$. Let $\pi:\cA\to\cB(\cH)$ be the cyclic $*$-representation of $\cA$ induced by $\phi$ by the Gelfand-Naimark-Segal construction \cite[Theorem 9.14]{Tak02}. Then $\pi$ is a $*$-embedding since $\phi$ is faithful, and $\pi(f)\succeq0$ in $\cB(\cH)$. Therefore $f\succeq0$ in $\cA$ by \cite[Proposition I.4.8 and Theorem I.6.1]{Tak02}.

Assume (iii) holds. Then $\phi(p(f)fp(f)^*)\ge0$ for all $p\in\R[t]$  since $(\phi_d)_d$ is increasing. Let $\cB$ be the unital abelian C*-subalgebra in $\cA$ generated by $f$. By the proof (ii)$\Rightarrow$(i) (with $\cB$ and $\{f\}$ in place of $\cA$ and $G$, respectively), $f\succeq0$ in $\cB$. Therefore $f=bb^*$ for some $b\in\cB$, so $f\succeq0$ in $\cA$.
\end{proof}

\section{Converging upper bound hierarchies}
\label{sec:hierarchy}

Let $\cA$ be a unital C*-algebra with a finite generating set $G$. Impose an order on $G$, and let $G_d$ be the list of $\star$-words in $G$ of length at most $d$, ordered degree-lexicographically. 
To a state $\phi$ on $\cA$, $d\in\N$ and $f=f^*\in\cA$ we assign the matrix
$$\M_{G,d}(f\, \phi):=
\Big(\phi(u^*fv)\Big)_{u,v\in G_d}.
$$
In the special case $G=\{f\}$, write
$$\M_{k,d}(f\, \phi):=\M_{\{f\},d}(f^k\, \phi)=
\Big(\phi(f^{i+j+k})\Big)_{i,j=0}^d
$$
for $k\ge0$.

The minimal eigenvalue of $f$ is denoted by $f_{\min} = \sup\{\alpha\in\R\colon f-\alpha1\succeq0\}$. 

\subsection{Hierarchies of generalized eigenvalue problems}
\label{sec:hierarchies}

\begin{corollary}\label{c:general}
Let $\cA$ be a unital C*-algebra, $G$ its generating set, and 
$(\phi_d)_{d=1}^\infty$ an increasing separating sequence of states on $\cA$. 
For $f=f^*\in\C\langle G\rangle$ and $d\in\N$ denote
\begin{align*}
\lambda_d&=\max\left\{\lambda\in\R\colon 
\M_{G,d}(f\, \phi_d)\succeq \lambda \M_{G,d}(1\, \phi_d)
\right\}, \\
\eta_d&=\max\left\{\eta\in\R\colon \M_{1,d}(f\, \phi_d)\succeq \eta \M_{0,d}(f\, \phi_d)\right\}.
\end{align*}
Then $(\lambda_d)_d$ and $(\eta_d)_d$ are decreasing sequences, and
$$\lim_{d\to\infty}\lambda_d=
\lim_{d\to\infty}\eta_d=f_{\min}.
$$
\end{corollary}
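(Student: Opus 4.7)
The plan is to translate each generalized eigenvalue problem into a positivity condition on $\phi_d$, verify $\lambda_d,\eta_d\ge f_{\min}$ by feasibility of $f_{\min}$, and prove convergence by applying Theorem~\ref{t:general} to a suitable subsequence of $(\phi_d)$.

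First, the LMI $\M_{G,d}(f,\phi_d)\succeq\lambda\M_{G,d}(1,\phi_d)$ is equivalent to $\phi_d(h(f-\lambda)h^*)\ge 0$ for every $h\in\C\langle G\rangle_d$; likewise $\M_{1,d}(f,\phi_d)\succeq\eta\M_{0,d}(f,\phi_d)$ amounts to $\phi_d(p(f)(f-\eta)p(f))\ge 0$ for every $p\in\R[t]_d$. Since $f-f_{\min}\cdot 1\succeq 0$ in $\cA$, the elements $h(f-f_{\min})h^*$ and $p(f)(f-f_{\min})p(f)$ are positive in $\cA$, so every state is nonnegative on them; thus $\alpha=f_{\min}$ is feasible in each program and $\lambda_d,\eta_d\ge f_{\min}$ for every $d$.

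For $\lim_d\lambda_d=f_{\min}$ I would argue by contradiction. If $\limsup_d\lambda_d>f_{\min}$, extract $\epsilon>0$ and a subsequence $(d_k)_k$ with $\lambda_{d_k}\ge f_{\min}+\epsilon$. The subsequence $(\phi_{d_k})_k$ is still increasing (transitivity of weak-domination) and separating, because once $\phi_d(aa^*)>0$ weak-domination forces $\phi_{d'}(aa^*)>0$ for every $d'\ge d$, so any tail of an increasing separating sequence remains separating. Feasibility of $\lambda_{d_k}$ together with $\C\langle G\rangle_k\subset\C\langle G\rangle_{d_k}$ yields $\phi_{d_k}(h(f-f_{\min}-\epsilon)h^*)\ge 0$ for every $h\in\C\langle G\rangle_k$. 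Invoking Theorem~\ref{t:general}(ii) on $(\phi_{d_k})_k$ then gives $f-(f_{\min}+\epsilon)\succeq 0$ in $\cA$, contradicting the definition of $f_{\min}$. The assertion $\lim_d\eta_d=f_{\min}$ is proved identically using part (iii) of Theorem~\ref{t:general} in place of (ii).

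The remaining claim---that $(\lambda_d)$ and $(\eta_d)$ are monotonically decreasing---is the main technical obstacle I foresee. Enlarging the test space ($\C\langle G\rangle_d\subset\C\langle G\rangle_{d+1}$) alone tends to push the bound down, but the state also changes from $\phi_d$ to $\phi_{d+1}$, and weak-domination only controls the states on positive elements, whereas $h(f-\lambda)h^*$ is not positive in general. The natural tool is the bounded intertwiner $T\colon\cH_{d+1}\to\cH_d$ between the GNS Hilbert spaces supplied by the inequality $\phi_d(aa^*)\le\alpha_d\phi_{d+1}(aa^*)$, satisfying $T\pi_{d+1}(a)=\pi_d(a)T$; one then transports a feasible level-$(d+1)$ test vector down to level $d$ through $T$ and compares Rayleigh quotients of $\pi_{d+1}(f)$ and $\pi_d(f)$ on the corresponding cyclic subspaces, though the bookkeeping of the state change against the enlargement of the test space is delicate.
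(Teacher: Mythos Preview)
Your treatment of the limits $\lim_d\lambda_d=\lim_d\eta_d=f_{\min}$ is correct and follows exactly the paper's route: both reduce to Theorem~\ref{t:general}. The paper simply writes ``the limit values follow from Theorem~\ref{t:general}'', and your subsequence argument is precisely what unpacking that sentence requires if one does not presuppose monotonicity. Your observation that any tail (hence any subsequence) of an increasing separating sequence remains increasing and separating, together with $d_k\ge k$ so that $\C\langle G\rangle_k\subset\C\langle G\rangle_{d_k}$, is the right way to feed the hypotheses of Theorem~\ref{t:general}.

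Your hesitation about monotone decrease is well placed, and the GNS--intertwiner repair you sketch cannot succeed in the stated generality. The paper itself only asserts that ``monotonicity of $(\lambda_d)_d$ and $(\eta_d)_d$ is a consequence of $(\phi_d)_d$ being an increasing sequence of states'' with no further argument, and weak domination---a one-sided bound on \emph{positive} elements---does not control the sign of $\phi_d(h(f-\lambda)h^*)$ versus $\phi_{d+1}(h(f-\lambda)h^*)$. A concrete obstruction: take $\cA=C([0,1])$, $G=\{x\}$, $f=x$; let $\phi_1$ be integration against the probability density $\rho_1(t)\propto e^{-t/\epsilon}$ on $[0,1]$, and $\phi_d$ be Lebesgue measure for $d\ge2$. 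Since $\rho_1$ is bounded one has $\phi_1(|g|^2)\le(\sup\rho_1)\,\phi_2(|g|^2)$, so the sequence is increasing, and it is separating because Lebesgue measure is faithful. Testing with $h=1$ gives $\lambda_1\le\phi_1(x)\approx\epsilon$, whereas $\lambda_2=\min_{p\in\R[t]_2}\int_0^1 t\,p(t)^2\,dt\big/\int_0^1 p(t)^2\,dt$ is a fixed positive number independent of $\epsilon$; hence $\lambda_2>\lambda_1$ for small $\epsilon$ (and the same example gives $\eta_2>\eta_1$, since here $\eta_d=\lambda_d$). The upshot is that the convergence half of the corollary stands via your argument, while the monotone-decrease half needs a stronger hypothesis on $(\phi_d)_d$ than mere weak domination.
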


\begin{proof}
Monotonicity of $(\lambda_d)_d$ and $(\eta_d)_d$ is a consequence of $(\phi_d)_d$ being an increasing sequence of states. The limit values follow from Theorem \ref{t:general}.
\end{proof}

The sequences $(\lambda_d)_d$ and $(\eta_d)_d$ can be viewed as the noncommutative analogues of the sequences of upper bounds for standard polynomial optimization from \cite{lasserre2011new} and \cite{lasserre2021connecting}, respectively. 
At a given relaxation order $d$ computing either $\lambda_d$ or $\eta_d$ boils down to solving a generalized eigenvalue problem. 

Given a self-adjoint element $f$ of a finitely generated C*-algebra $\cA$, Corollary \ref{c:general} and \eqref{e:inc} give a sequence of generalized eigenvalue problems whose solutions converging to the infimum of $f$ in $\cA$, as long as there is an increasing sequence of states on $\cA$ that is efficiently computable. 
The following are examples of separable C*-algebras and their faithful states, or separating sequences of states. 

\begin{enumerate}
    \item $\Cstarred(G)$ for a finitely generated discrete group $G$, with the canonical tracial state $\tau$.
    \item $\Cstarfull(\Z^{\star n})$, with a separating sequence
    \begin{equation}\label{e:integralstate}
    \phi_d(w)=\frac{1}{d}\int_{U\in \unit{d}^n}\tr w(U)\,{\rm d}U.
    \end{equation}
    The separating property of \eqref{e:integralstate} follows by \cite[Theorem 7]{choi80} (cf. \cite[Corollary 4.7]{ratresolvable}).
    Note that the states \eqref{e:integralstate} do not readily form an increasing sequence, but their finite combinations as in \eqref{e:inc} do. The states \eqref{e:integralstate} can be evaluated using the Collins-\'Sniady calculus for Haar integration over unitary groups \cite[
Corollary 2.4]{collins06}.
    \item Suppose $\cA_1$ and $\cA_2$ are C*-algebras with faithful states $\phi_1$ and $\phi_2$ respectively. Then the state $\phi_1\otimes\phi_2$ on the minimal (injective) tensor product $\cA_1\otimes_{\min} \cA_2$ is faithful \cite[Theorem IV.4.9]{Tak02}, and the state $\phi_1\star\phi_2$ on the reduced free product $\cA_1\star\cA_2$ is faithful \cite[Theorem 1.1]{dykema98}.
    Values of $\phi_1\otimes\phi_2$ and $\phi_1\star\phi_2$ are easily expressible with values of $\phi_1$ and $\phi_2$.
    \item Combining (2) and (3), 
    one gets an explicit separating sequence for $\Cstarfull(\Z^{\star n})\otimes_{\min} \Cstarfull(\Z^{\star m})$.
    As a side remark, note that $\Cstarfull(\Z^{\star n})\otimes_{\min} \Cstarfull(\Z^{\star m})$ is not isomorphic to $\Cstarfull(\Z^{\star n})\otimes_{\max} \Cstarfull(\Z^{\star m})\cong \Cstarfull(\Z^{\star n}\times \Z^{\star m})$ for $n,m\ge2$ by the refutation of Connes' embedding conjecture (and its equivalence to Kirchberg's conjecture).
\end{enumerate}

\subsection{Bell inequalities}
\label{sec:bell}
Now we apply the above framework to obtain lower bounds for maximal violation levels for Bell inequalities. 
One particularly famous Bell inequality is the CHSH inequality by \cite{clauser1969proposed}, where the setting is a quantum system
consisting of two measurements for each party, each with the two outcomes $\pm 1$. 
The measurements can be modeled by four unitary operators $x_1, x_2, y_1, y_2$ satisfying $x_i^2 = 1 = y_j^2$. 
Since we are interested in the non-local behavior of our quantum system, we impose the additional constraint that the operators 
$x_i$'s act on one Hilbert space, and $y_j$'s act on another Hilbert space.
The maximum violation of CHSH corresponds to  the opposite of the minimal eigenvalue of $f = -x_1 \otimes y_1 - x_1 \otimes y_2 - x_2 \otimes y_1 + x_2 \otimes y_2$ (acting on the tensor product of Hilbert spaces) under the above unitary/commutativity constraints. 
In the sequel, we denote this minimal eigenvalue by $f_{\min}$.

For certain Bell inequalities the measurement operators are not initially constrained to be unitaries (as in the above example) but to be  projectors, in which case we apply a change of variables to obtain unitaries, by defining $b_i := 2 x_i -1$ and $c_j := 2 y_j -1$, which yields $b_i^2 = (2 x_i -1)^2 = 1 = c_j^2$, for each $i \in [n], \ j \in [m]$. 
If the operators are initially unitaries, then we take $b_i := x_i$ and $c_j := y_j$. 
Then the polynomials involved in Bell inequalities lie in the  separable C*-algebra $\Cstarfull(\Z^{\star n})\otimes_{\min} \Cstarfull(\Z^{\star m})$. When $m,n\le2$, this is isomorphic to $\Cstarred(\Z_{2}^{\star n} \times \Z_2^{\star m})$ since $\Z_{2}^{\star n} \times \Z_2^{\star m}$ is amenable \cite[Theorems XIII.4.6 and XIII.4.7]{Tak03}. 
For any nonzero $s\in \N$, let $\I_s$ be the identity matrix of size $s$. 
Our strategy 
to obtain upper bounds of $f_{\min}$ 
is to rely on tensor products of separating sequences~\eqref{e:integralstate} from  Section~\ref{sec:hierarchies} by parametrizing Hermitian unitaries by unitaries and signatures, i.e., by writing each Hermitian unitary $b_i$ of size $d$ as $b_i = U_i \begin{pmatrix}
\I_{r_i} & 0 \\
0 & - \I_{d-{r_i}}
\end{pmatrix}  U_i^{\star}$ for some $r_i \leq d$ and $U_i \in \unit{d}$. 
It turns out that it is sufficient to consider only $b_i$ of even size $2d$ with $r_i=d$.
Then for every word $w$ in $b$, one could use the state returning 
\begin{align*}
\frac{1}{2d}\int_{U\in \unit{2d}^n}\tr & \left[
w \Biggl(U_1 \begin{pmatrix}
\I_d & 0 \\
0 & - \I_d
\end{pmatrix}  U_1^{\star}, \right.\\
 & \left. \dots, U_n \begin{pmatrix}
\I_d & 0 \\
0 & - \I_d
\end{pmatrix}  U_n^{\star}   \Biggr)
\right]
\,{\rm d}U.
\end{align*}
Since $\tr(w_1\otimes w_2)=\tr(w_1)\tr(w_2)$ for words $w_1$ in the $b_i$'s and words $w_2$ in the $c_j$'s,
we simply rely on products of such state evaluations for our numerical experiments. 

As preliminary computation outcomes based on the \texttt{IntU} Mathematica library by \cite{puchala2011symbolic}, the upper bound sequences obtained for CHSH are $(\lambda_1, \lambda_2) = (0.146, -0.016)$ and $(\eta_1,\eta_2) = (0, -0.066)$. 
Since the quantum bound is known to be $f_{\min} = (1-\sqrt{2})/2 \simeq  -0.207$, one likely needs to be able to efficiently compute quite a few steps before one gets close to the actual value. 
Further work directions include a more careful algorithmic implementation towards this goal.


%
\end{document}